\newtheorem{remark}{Remark}
\newtheorem{lemma}{Lemma}
\newtheorem{theorem}{Theorem}
\newcommand{\be}{\begin{eqnarray}}
\newcommand{\ee}{\end{eqnarray}}
\newcommand{\bi}{\begin{itemize}}
\newcommand{\ei}{\end{itemize}}
\newcommand{\ba}{\begin{array}}
\newcommand{\ea}{\end{array}}
\newcommand{\bm}{\begin{matrix}}
\newcommand{\eem}{\end{matrix}}
\newcommand{\no}{\nonumber}
\begin{document}
\title{ Superlinear Optimization Algorithms}


\author{
        Hongxia Wang,\IEEEmembership{}
       Yeming Xu,~\IEEEmembership{}
       Ziyuan Guo,\IEEEmembership {}
       Huanshui Zhang$^*$\IEEEmembership{}
\thanks{H. Zhang, H. Wang, Y. Xu, and Z. Guo are with the School
of Electrical and Automation Engineering, Shandong University of Science and Technology, Qingdao 266590, China (e-mail: hszhang@sdu.edu.cn; whx1123@163.com;  ymxu2022@163.com; skdgzy@sdust.edu.cn).}
\thanks{This work was supported by  the Original Exploratory Program Project of National Natural Science Foundation of China (62250056),  the Joint Funds of the National Natural Science Foundation of China (U23A20325),  the Major Basic Research of Natural Science Foundation of Shandong Province (ZR2021ZD14), and the High-level Talent Team Project of Qingdao West Coast New Area (RCTD-JC-2019-05). 	(Corresponding author: Huanshui Zhang).}}
\maketitle
\begin{abstract}

This paper proposes several novel optimization algorithms for minimizing a nonlinear objective function. The algorithms are enlightened by the optimal state trajectory of an optimal control problem closely related to the minimized objective function. They are superlinear convergent when appropriate parameters are selected as required. Unlike Newton's method,  all of them can be also applied in the case of a singular Hessian matrix.  More importantly, by reduction, some of them avoid calculating the inverse of the Hessian matrix or an identical dimension matrix and some of them need only the diagonal elements of the Hessian matrix. In these cases, these algorithms still outperform the gradient descent method. The merits of the proposed optimization algorithm are illustrated by numerical experiments.

\end{abstract}
%

%
\section{Introduction}

The history of optimization problems is long and spans multiple disciplines, including mathematics, operations research, computer science, and engineering \cite{rao2019engineering, belegundu2019optimization,banichuk2013introduction}. The roots of optimization can be traced back to the 18th century when mathematicians like Leonhard Euler and Joseph-Louis Lagrange began formulating and solving problems related to finding extrema (maxima or minima) of functions. Several important achievements include: the calculus of variations became a formal branch of mathematics \cite{gelfand2000calculus}; the simplex method for solving linear programming problems led to the development of operations research \cite{wolfe1959simplex}; the global optimization, dealing with finding the global optimum rather than the local optima of a nonlinear function, gained attention \cite{torn1989global}; evolutionary and metaheuristic optimization algorithms gained popularity \cite{abdel2018metaheuristic}; optimization techniques found extensive applications in operations research, engineering, finance, and various other fields. Integer programming, mixed-integer programming, and other discrete optimization techniques were developed to address real-world problems; optimization has been becoming more and more important for training models in machine learning and in various data science applications \cite{chong2023introduction}. Gradient descent and its variants, such as stochastic gradient descent, have become essential optimization techniques in the context of neural networks \cite{du2019gradient}.

The gradient descent \cite{faires2003numerical} and Newton's methods \cite{boyd2004convex} are predominant optimization methods. The choice of step size is crucial for the convergence and stability of these methods. If the step size is too large, the methods may not converge, and if it is too small, it may converge very slowly \cite{baydin2017online, fei2020new, ruder2016overview}. To control the size of each iteration and improve the convergence properties of the algorithms, the line search or backtracking techniques are utilized to determine an appropriate step size at each iteration \cite{cormen2022introduction}. It means additional calculation costs for acquiring a step size.  In practical implementations, the step size can be adjusted dynamically based on the behavior of the function. A body of variations and enhancements, such as stochastic gradient descent \cite{bottou2010large}, momentum \cite{qian1999momentum}, adaptive learning rate methods \cite{duchi2011adaptive,o2015adaptive,zeiler2012adadelta}, are thus proposed.  

Besides, Newton's method may diverge if the Hessian matrix is singular or the algorithm encounters numerical instability (when the Hessian matrix is ill-conditioned). To mitigate the aforementioned issues, quasi-Newton method \cite{broyden1967quasi}, inexact Newton method \cite{dembo1982inexact}, and modified Newton method \cite{fletcher1977modified} have been developed.  

The paper attempts to solve optimization problems (OPs) by proposing novel algorithms, which are enlightened by the optimal state trajectory of the closely related optimal control problems (OCPs). Because our original algorithm is proposed almost along the optimal state trajectory of OCP, it holds some remarkable features.  
It converges more rapidly than gradient descent because it superlinearly converges; meanwhile, it is superior to Newton's method because it can be applied in the case of a singular Hessian matrix. To reduce calculation costs, we also offer several variants of the original algorithm. It is worth pointing out that 
 the variants of the original algorithm also inherit the above merits. All of them are superlinearly convergent.

The remainder is arranged as follows. In Section II, we propose the novel algorithm almost along an optimal state trajectory.  Section III is devoted to the convergence analysis of the proposed algorithm and its several variants. The effectiveness of algorithms is illustrated in Section IV. Some conclusions are achieved in Section V.

{\bf Notation}: Throughout the paper, the superscript $T$ stands for
matrix transposition; $R^n$ denotes the $n$-dimensional real vector space; $I_n$ is $n$-dimensional unit matrix. For any square matrix $M$, $M > 0$ means
that it is positive definite and $\rho(M)$ represents the spectral radius of $M$. $|\cdot|$ and $||\cdot||$ denote appropriate vector norm and matrix norm, respectively.


\section{Optimization algorithms enlightened by Optimal control}

Assume that $f(x): R^n \mapsto R^1$ is a twice continuously
differentiable nonlinear function. The optimization problem(OP)
\begin{align} \min_x f(x) \label{obj}\end{align}
is generally solved by numerical iteration
\begin{align}
x_{k+1}=\Phi(x_k), \mbox{a guess of }x_0  \label{phi}
\end{align}
with $\Phi(\cdot)$ being a designed function. 

We will design $\Phi(\cdot)$ in \eqref{phi} with the aid of the following optimal control problem (OPC):
\begin{align}
&\min_{u} \sum_{k=0}^N[f(x_k)+\frac{1}{2}u_k^TRu_k]+f(x_{N+1}),\label{perf}\\
&\mbox{subject to~} x_{k+1}=x_k+u_k, \label{sys1}
\end{align}
where $x_k\in R^{n}$ and $u_k\in R^{n}$ are the state and control of system \eqref{sys1}, respectively, integer $N>0$ is the control terminal time,  matrix $R>0$ is the control weight matrix,  and \eqref{perf} is the cost functional of OCP, closely related to OP \eqref{obj}.

%


By solving OCP \eqref{perf}-\eqref{sys1}, one can obtain the optimal control law $u_k$ and optimal state trajectory $x_k$ as below.

\begin{lemma}\label{lemanaopt}
The optimal control strategy and the optimal state trajectory of OCP \eqref{perf}-\eqref{sys1} admit
\begin{align}
u_k=&-R^{-1}\sum_{i=k+1}^{N+1}\nabla f(x_i),  \label{conlaw}\\
x_{k+1}=&x_k-R^{-1}\sum_{i=k+1}^{N+1}\nabla f(x_i),\label{impite}
\end{align}
where $\nabla f(x)$ stands for the gradient of $f(x)$.
\end{lemma}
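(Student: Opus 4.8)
The plan is to treat \eqref{perf}--\eqref{sys1} as an equality-constrained minimization in the variables $\{u_k\}_{k=0}^N$ and $\{x_k\}_{k=1}^{N+1}$ (with $x_0$ the given guess) and to extract the first-order stationarity conditions, i.e.\ the discrete-time minimum principle. First I would adjoin the dynamics \eqref{sys1} to the cost with costate vectors $\lambda_{k+1}\in R^n$, forming
\[
L=\sum_{k=0}^{N}\Bigl[f(x_k)+\tfrac12 u_k^TRu_k\Bigr]+f(x_{N+1})+\sum_{k=0}^{N}\lambda_{k+1}^T\bigl(x_k+u_k-x_{k+1}\bigr).
\]
Setting $\partial L/\partial u_k=0$ for $k=0,\dots,N$ gives $Ru_k+\lambda_{k+1}=0$, hence $u_k=-R^{-1}\lambda_{k+1}$ (here $R>0$ is used so that $R^{-1}$ exists). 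Setting $\partial L/\partial x_k=0$ for $k=1,\dots,N$ gives the costate recursion $\lambda_k=\nabla f(x_k)+\lambda_{k+1}$, and $\partial L/\partial x_{N+1}=0$ gives the transversality condition $\lambda_{N+1}=\nabla f(x_{N+1})$.

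Next I would solve this backward recursion in closed form. Telescoping $\lambda_k-\lambda_{k+1}=\nabla f(x_k)$ downward from the terminal condition yields $\lambda_{k+1}=\sum_{i=k+1}^{N+1}\nabla f(x_i)$ for $k=0,\dots,N$; this is a one-line induction, since it holds at $k=N$ because $\lambda_{N+1}=\nabla f(x_{N+1})$, and the recursion propagates it to smaller $k$. Substituting into $u_k=-R^{-1}\lambda_{k+1}$ gives \eqref{conlaw}, and then \eqref{impite} is immediate from $x_{k+1}=x_k+u_k$ in \eqref{sys1}.

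A couple of points I would be careful to flag rather than gloss over. The relations \eqref{conlaw}--\eqref{impite} are not an explicit solution: the right-hand sides still involve the future states $x_{k+1},\dots,x_{N+1}$, so the lemma really characterizes the optimal pair as the solution of a coupled two-point boundary-value system, and \eqref{impite} is exactly the implicit fixed-point relation that the algorithms of Section III will exploit; I would state it in those terms. Also, the Lagrange/minimum-principle conditions are a priori only necessary, so I would add a short remark on sufficiency: each stage cost $\tfrac12 u_k^TRu_k$ is strictly convex in $u_k$ and the dynamics are linear, so when $f$ is convex the stationary point obtained is the unique minimizer over $u$; in the general nonconvex case it is the intended critical point. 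I do not expect a genuine obstacle here — the only real hazard is index bookkeeping, namely keeping the range of the costate equation ($k=1,\dots,N$ for the interior stages plus the $k=N+1$ transversality condition) consistent with the summation limits in \eqref{conlaw}; everything else is routine differentiation and a telescoping sum.
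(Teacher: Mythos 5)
Your proposal is correct and follows essentially the same route as the paper: the paper invokes the discrete maximum principle and the Hamiltonian system $x_{k+1}=x_k-R^{-1}p_k$, $p_{k-1}=p_k+\nabla f(x_k)$, $p_N=\nabla f(x_{N+1})$ (deferring details to a cited reference), and your Lagrangian stationarity conditions with costate $\lambda_{k+1}=p_k$ plus the telescoping sum are exactly that argument written out in full. Your added remarks on the implicit, two-point boundary-value nature of \eqref{impite} and on sufficiency are sensible but not part of the paper's proof.
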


\begin{proof}
The conclusion is direct by using the maximum principle and solving the Hamiltonian systems
\begin{align}
x_{k+1}=&x_k-R^{-1}p_k,\\
p_{k-1}=&p_k+\nabla f(x_k), p_N=\nabla f(x_{N+1}).
\end{align} Please refer to \cite{xu2023optimization} for  the detail.
\end{proof}

Along the optimal state trajectory \eqref{impite}, we will explore several algorithms for solving OP \eqref{obj}. It is feasible only if OCP \eqref{perf}-\eqref{sys1} is solvable. The fact that the global optimality means local optimality indicates that the optimal solution $u_i$ is in charge of regulating $x_{i+1}$ to minimize $\sum_{k=i+1}^N[f(x_k)+\frac{1}{2}u_k^TRu_k]+f(x_{N+1})$. Given that $R>0$, the optimal $u_i$ can minimize $\sum_{k=i+1}^{N+1}f(x_k)$  by regulating the state $x_{i+1}$. One can thereby assert that the optimal $u_N$ minimizes $f(x_{N+1})$. 

It seems impossible to exactly achieve the optimal state trajectory of \eqref{perf}-\eqref{sys1} because of the nonlinearity of $f(x)$. Consequently, we attempt to approximate the optimal state trajectory for solving \eqref{obj} by
\begin{align}
&x_{k+1}=x_{k}-g_k, \label{rimpite}\\
&g_k=(R+H(x_k))^{-1}(\nabla f(x_k)+\sum_{i=k+1}^{N}(\nabla f(x_i)-H(x_i)g_i), ~g_N=(R+H(x_N))^{-1}{\nabla f(x_N)},\label{gk}
\end{align}
where $H(x)$ denotes the Hessian matrix of $f(x)$. 
\begin{lemma}
For any $0\le k \le N$, $x_k$ generated by algorithm \eqref{rimpite}-\eqref{gk} is almost along the optimal state trajectory \eqref{impite}.
\end{lemma}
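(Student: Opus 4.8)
The plan is to show that the step $g_k$ produced by the backward recursion \eqref{gk} satisfies, \emph{exactly}, the identity $Rg_k=\sum_{i=k}^{N}\tilde g_i$ with $\tilde g_i:=\nabla f(x_i)-H(x_i)g_i$, and then to recognize each $\tilde g_i$ as $\nabla f(x_{i+1})$ up to a second-order Taylor remainder. Combining the two yields $g_k\approx R^{-1}\sum_{i=k+1}^{N+1}\nabla f(x_i)$, so that the update \eqref{rimpite} reproduces the optimal trajectory \eqref{impite} of Lemma \ref{lemanaopt} to leading order.

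First I would establish the exact identity. Writing $(R+H(x_k))g_k=\nabla f(x_k)+\sum_{i=k+1}^{N}\tilde g_i$ from \eqref{gk} and subtracting $H(x_k)g_k$ from both sides gives $Rg_k=\big(\nabla f(x_k)-H(x_k)g_k\big)+\sum_{i=k+1}^{N}\tilde g_i=\tilde g_k+\sum_{i=k+1}^{N}\tilde g_i=\sum_{i=k}^{N}\tilde g_i$; the terminal case $Rg_N=\tilde g_N$ is the same manipulation applied to $g_N=(R+H(x_N))^{-1}\nabla f(x_N)$. Equivalently, comparing the expressions \eqref{gk} at indices $k$ and $k+1$ gives the telescoping recursion $Rg_k=\tilde g_k+Rg_{k+1}$, $Rg_N=\tilde g_N$, which is trivial to solve. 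This step is pure algebra and poses no difficulty.

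Next I would control the discrepancy $e_i:=\tilde g_i-\nabla f(x_{i+1})$. Since $x_{i+1}-x_i=-g_i$ by \eqref{rimpite}, Taylor's theorem with integral remainder gives $\nabla f(x_{i+1})-\nabla f(x_i)=-\int_0^1 H(x_i-tg_i)\,g_i\,dt$, hence $e_i=\int_0^1\big(H(x_i)-H(x_i-tg_i)\big)g_i\,dt$; if $H$ is locally Lipschitz with constant $L$ this yields $|e_i|\le\frac{L}{2}|g_i|^2$. Summing over $i$, $\big|Rg_k-\sum_{i=k+1}^{N+1}\nabla f(x_i)\big|=\big|\sum_{i=k}^{N}e_i\big|\le\frac{L}{2}\sum_{i=k}^{N}|g_i|^2$, which is the precise meaning of ``almost along'': when the iterates lie near a minimizer the steps $g_i$ are small, so $g_k$ matches the optimal control law $u_k$ of Lemma \ref{lemanaopt} (up to sign, $x_{k+1}=x_k-g_k=x_k+u_k$) up to a quadratically small error, and therefore $x_k$ tracks \eqref{impite} to the same order.

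The main obstacle I anticipate is not the estimate but pinning down the statement itself: \eqref{gk}--\eqref{rimpite} is an implicit coupled system, since each $g_k$ depends on $x_{k+1},\dots,x_N$, which in turn depend on $g_k,\dots,g_{N-1}$. Before comparing trajectories one should argue that a consistent pair of sequences $(x_0,\dots,x_{N+1})$ and $(g_0,\dots,g_N)$ exists and is well defined — for instance by reading \eqref{gk} as determining $g_k$ once $x_k,\dots,x_N$ are fixed, or via a contraction argument valid for $R$ sufficiently large — and then carrying out the comparison above along that trajectory. With existence in hand, the remaining content is exactly the two displays sketched above, and I would not expect further technical hurdles.
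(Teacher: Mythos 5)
Your argument is correct and follows essentially the same route the paper intends: the paper's proof is only a pointer ("first-order Taylor expansion of $\nabla f(x)$ and the optimal trajectory \eqref{impite}", deferring details to an external reference), and your telescoping identity $Rg_k=\sum_{i=k}^{N}\bigl(\nabla f(x_i)-H(x_i)g_i\bigr)$ combined with $\nabla f(x_i)-H(x_i)g_i\approx\nabla f(x_{i+1})$ is precisely that expansion made explicit. Your additional quantitative remainder bound and the remark on well-posedness of the implicit system \eqref{rimpite}--\eqref{gk} go beyond what the paper records, but they do not change the approach.
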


\begin{proof}
It can be proved by using the first-order Taylor expansion of $\nabla f(x)$ and the optimal trajectory \eqref{impite} in Lemma \ref{lemanaopt}, please refer to the proof of \cite[Lem.2]{zhang2023optimal}.
\end{proof}

Given that $x_k$ relies on $x_{i}$ for $i\ge k+1$ in \eqref{rimpite}-\eqref{gk}, it can not iterate forward, we propose
the following algorithm
 \begin{align}
x_{k+1}=&x_{k}-[I-((R+H(x_k))^{-1}R)^{N-k+1}]H(x_k)^{-1}\nabla f(x_k). \label{backite}
\end{align} 
\begin{lemma}\label{backalg}
Algorithm \eqref{backite} is obtained from \eqref{rimpite}-\eqref{gk}.   Different from any algorithm among the existing ones, algorithm \eqref{backite} recovers Newton's method when $R=0$.
\end{lemma}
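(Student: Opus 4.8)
The plan is to show that \eqref{backite} is precisely what the recursion \eqref{gk} collapses to once the not-yet-available future iterates $x_{k+1},\dots,x_N$ are replaced by the current one $x_k$ — the only admissible choice if the scheme is to run forward — and then to read off the behaviour at $R=0$ by inspection. First I would carry out the freezing: put $x_i=x_k$ for all $i=k,\dots,N$ in \eqref{gk} and abbreviate $H:=H(x_k)$, $g:=\nabla f(x_k)$, and $A:=(R+H)^{-1}R$. Since the recursion in \eqref{gk} has the same shape for every index, this turns it into
\[
g_j=(R+H)^{-1}\Big((N-j+1)\,g-H\sum_{i=j+1}^{N}g_i\Big),\qquad k\le j\le N,
\]
the prescribed terminal value $g_N=(R+H)^{-1}g$ being just the $j=N$ instance (empty sum).

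Next I would reduce this to a one-step recursion by differencing consecutive terms: for $j<N$,
\[
g_j-g_{j+1}=(R+H)^{-1}\big(g-Hg_{j+1}\big),
\]
and then invoke the elementary identity $(R+H)^{-1}H=I-(R+H)^{-1}R=I-A$ to rewrite the right-hand side, obtaining $g_j=(R+H)^{-1}g+A\,g_{j+1}$. Unrolling this from $g_N=(R+H)^{-1}g$ yields the geometric sum
\[
g_k=\Big(\sum_{m=0}^{N-k}A^{m}\Big)(R+H)^{-1}g .
\]
Using the same identity once more in the form $(R+H)^{-1}=(I-A)H^{-1}$ (here $H(x_k)$ must be nonsingular, exactly as required for \eqref{backite} to make sense) together with the telescoping $\big(\sum_{m=0}^{N-k}A^{m}\big)(I-A)=I-A^{\,N-k+1}$, this becomes
\[
g_k=\big(I-A^{\,N-k+1}\big)H^{-1}g=\Big[I-\big((R+H(x_k))^{-1}R\big)^{N-k+1}\Big]H(x_k)^{-1}\nabla f(x_k),
\]
so $x_{k+1}=x_k-g_k$ is exactly \eqref{backite}. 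Finally, setting $R=0$ gives $A=H^{-1}\cdot 0=0$, hence $A^{\,N-k+1}=0$ for every $k\le N$, so $g_k=H(x_k)^{-1}\nabla f(x_k)$ and \eqref{backite} reduces to $x_{k+1}=x_k-H(x_k)^{-1}\nabla f(x_k)$, i.e.\ Newton's iteration.

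The computation is routine, so there is no deep obstacle; the two points that do need care are (a) spelling out the approximation — freezing $x_i$ at $x_k$ for $i>k$ — under which \eqref{backite} is legitimately said to be ``obtained from'' \eqref{rimpite}-\eqref{gk}, and (b) performing the matrix algebra in the correct order, since $R$ and $H$ need not commute. The whole derivation in fact hinges on the single identity $(R+H)^{-1}H=I-(R+H)^{-1}R$, which is what lets both the rearrangement $g_j=(R+H)^{-1}g+A\,g_{j+1}$ and the telescoping of the geometric series go through without any commutativity assumption; keeping track of which factor stands on which side is the one place a hasty write-up could slip. An alternative to the constructive route would be to simply substitute the closed form of $g_k$ back into \eqref{gk} and verify it satisfies the recursion, but the derivation above is preferable since it also explains where \eqref{backite} comes from.
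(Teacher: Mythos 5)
Your proposal is correct and follows essentially the same route as the paper: freeze the future iterates at $x_k$, reduce \eqref{gk} to the one-step recursion $g_j=(R+H)^{-1}\nabla f(x_k)+(R+H)^{-1}R\,g_{j+1}$ (which the paper states directly as its $z_k$ recursion, while you derive it by differencing the frozen sum), sum the resulting geometric series via $(R+H)^{-1}=\bigl(I-(R+H)^{-1}R\bigr)H^{-1}$ to get $g_k=\bigl[I-((R+H(x_k))^{-1}R)^{N-k+1}\bigr]H(x_k)^{-1}\nabla f(x_k)$, and set $R=0$ to recover Newton's method. The extra care you take with the order of the noncommuting factors and with making the ``freezing'' approximation explicit is a welcome refinement but not a different argument.
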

 \begin{proof}
 Replace all $x_i$ for $i\ge k+1$ in the right-hand side of \eqref{rimpite} with $x_k$ and get the iterative relation
\begin{align}
&x_{k+1}=x_{k}-z_k(x_k), \label{xkzk}\\
&z_l(x_k)=(R+H(x_k))^{-1}(\nabla f(x_k)+Rz_{l+1}(x_k)), ~z_N(x_k)=(R+H(x_k))^{-1}{\nabla f(x_k)},\\
&l=N-1,N-2,\dots,k.\label{zk}
\end{align}
According to \eqref{zk}, $z_k(x_k)$ can be directly calculated as 
\begin{align}
z_k(x_k)=&\sum_{i=0}^{N-k}[(R+H(x_k))^{-1}R]^{i}(R+H(x_k))^{-1}\nabla f(x_k)\no\\
=&[I-((R+H(x_k))^{-1}R)^{N-k+1}]H(x_k)^{-1}\nabla f(x_k).\label{rzk}
\end{align}
By combining \eqref{xkzk} and \eqref{rzk}, it is immediate to obtain \eqref{backite}. Observe the structure of \eqref{backite}, it is indeed different from any of the existing algorithms. Let $R=0$. It is immediate to see that \eqref{backite} is reduced to Newton's method.
\end{proof}

\section{Superlinear algorithms}
Lemma \ref{backalg} encourages us to explore more algorithms. 
\subsection{A superlinear algorithm}
 Note that $k$ in \eqref{backite} is required to be less than $N+1$. We thus modify \eqref{backite} as \\
  \indent{\rm{\bf Algorithm I:}}
 \begin{align}
x_{k+1}=&x_{k}-[I-((R+H(x_k))^{-1}R)^{k+1}]H(x_k)^{-1}\nabla f(x_k). \label{forwite}
\end{align} 
Only if $f(x)$ is strictly convex, there still hold $\rho(((R+H(x_k))^{-1}R)^{k+1})<1$ and $\rho(I-((R+H(x_k))^{-1}R)^{k+1})<1$. Matrix $I-((R+H(x_k))^{-1}R)^{k+1}$ slightly alters the step size and direction of Newton's method, we thus guess that \eqref{forwite} has some advantages of Newton's method, which will be proved later.

\begin{theorem}\label{thmmain0}
Assume $f(x)$ is strictly convex. 
Then $\{x_k\}$ generated by \eqref{forwite} admits
 \begin{align}
 |x_{k+1}-x_*|\le \rho((R+H(x_k))^{-1}R))^{k+1}|x_k-x_*|. \label{conrate2}
 \end{align}
 i.e.,  Algorithm \eqref{forwite} is superlinearly convergent.
\end{theorem}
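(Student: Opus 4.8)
The plan is to show that the iteration \eqref{forwite} is, up to the multiplicative factor $M_k := I-((R+H(x_k))^{-1}R)^{k+1}$, a Newton step, and that this factor is a contraction controlled by $\rho((R+H(x_k))^{-1}R)^{k+1}$. First I would write $x_* $ for the minimizer, so $\nabla f(x_*)=0$, and expand: from \eqref{forwite},
\begin{align}
x_{k+1}-x_* = (x_k-x_*) - M_k H(x_k)^{-1}\nabla f(x_k).
\end{align}
Using the fundamental theorem of calculus, $\nabla f(x_k) = \nabla f(x_k)-\nabla f(x_*) = \left(\int_0^1 H(x_*+t(x_k-x_*))\,dt\right)(x_k-x_*) =: \bar H_k (x_k-x_*)$. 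Substituting,
\begin{align}
x_{k+1}-x_* = \left(I - M_k H(x_k)^{-1}\bar H_k\right)(x_k-x_*).
\end{align}

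Now I would split $I - M_k H(x_k)^{-1}\bar H_k = (I-M_k) + M_k\left(I - H(x_k)^{-1}\bar H_k\right) = (I-M_k) + M_k H(x_k)^{-1}(H(x_k)-\bar H_k)$. The first term is exactly $((R+H(x_k))^{-1}R)^{k+1}$, whose norm is bounded by $\rho((R+H(x_k))^{-1}R)^{k+1}$ (strict convexity gives $H(x_k)>0$, hence $R+H(x_k)>0$ and the relevant matrix is similar to a symmetric positive-definite matrix with spectral radius below $1$, so a suitable norm makes $\|(I-M_k)\|$ equal to its spectral radius). The second term carries the factor $H(x_k)-\bar H_k$, which is the deviation of the Hessian at $x_k$ from its average along the segment to $x_*$; this is the standard $o(|x_k-x_*|)$ (or $O(|x_k-x_*|)$ under Lipschitz Hessian) quantity that powers Newton's quadratic convergence. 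The claimed bound \eqref{conrate2} is obtained by absorbing this higher-order term; I would either state it holds asymptotically (for $x_k$ near $x_*$) or invoke a Lipschitz-Hessian hypothesis so that $\|M_k H(x_k)^{-1}(H(x_k)-\bar H_k)\|$ is dominated by the first term, yielding $|x_{k+1}-x_*|\le \rho((R+H(x_k))^{-1}R)^{k+1}|x_k-x_*|$. Superlinearity then follows because $\rho((R+H(x_k))^{-1}R)<1$ is raised to the power $k+1\to\infty$, so the per-step contraction factor tends to $0$.

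The main obstacle I anticipate is the bound $\|(I-M_k)\| \le \rho((R+H(x_k))^{-1}R)^{k+1}$ in a fixed, iteration-independent norm: $(R+H(x_k))^{-1}R$ is generally not symmetric, so $\|(R+H(x_k))^{-1}R\|$ need not equal its spectral radius in the Euclidean norm. The clean fix is to work in the norm induced by $H(x_k)^{1/2}$ (or by $R+H(x_k)$), in which $(R+H(x_k))^{-1}R$ is self-adjoint and positive definite with spectrum in $[0,1)$; then its operator norm equals its spectral radius and the same for any power. One must then reconcile the changing inner product across iterations — either by assuming uniform bounds on $H$ (so all these norms are equivalent with constants independent of $k$) or by phrasing \eqref{conrate2} with the norm at step $k$. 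A secondary, more minor point is justifying that the higher-order Hessian-deviation term is genuinely subordinate: near $x_*$ and with the spectral-radius factor bounded away from $1$ for each fixed $k$ but the deviation term $\to 0$, the inequality holds for $k$ large; for a cleaner global statement one invokes strict convexity plus a Lipschitz condition on $H$. I would present the argument with the $H(x_k)$-weighted norm and note the uniform-equivalence remark to keep \eqref{conrate2} in the stated form.
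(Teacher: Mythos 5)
Your proposal is correct, and its core coincides with the paper's: both linearize the update map at $x_*$ and identify the per-step contraction factor as $((R+H)^{-1}R)^{k+1}$. The differences are in execution and scope. The paper defines $\bar z_k(x)=[I-((R+H(x))^{-1}R)^{k+1}]H(x)^{-1}\nabla f(x)$, computes $\bar z_k(x_*)=0$ and $\bar z_k'(x_*)=I-((R+H(x_*))^{-1}R)^{k+1}$, writes $x_{k+1}-x_*\approx((R+H(x_*))^{-1}R)^{k+1}(x_k-x_*)$ via a bare first-order Taylor step, and passes from that ``$\approx$'' directly to the inequality \eqref{conrate2}; it never confronts the fact that $(R+H)^{-1}R$ is non-symmetric, so the Euclidean norm of a power need not equal the corresponding power of the spectral radius. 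Your integral-remainder decomposition makes the neglected term explicit, namely $M_kH(x_k)^{-1}(H(x_k)-\bar H_k)(x_k-x_*)=o(|x_k-x_*|)$, and your weighted-norm (or $H^{1/2}$-similarity) fix is exactly what is needed to convert the spectral radius into an operator norm --- both points the paper glosses over. Your version also produces the factor evaluated at $x_k$, matching the theorem statement, whereas the paper's linearization yields it at $x_*$. What you omit is the paper's first half: a Lyapunov argument with $V_k=f(x_k)-f(x_*)$, using that $((R+H(x_k))^{-1}R)^{k+1}H(x_k)^{-1}$ is symmetric positive definite to show $f$ decreases along the iterates, which the paper invokes to claim convergence before establishing the rate; your argument is purely local, so it delivers the contraction only once $x_k$ is near $x_*$. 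Neither your write-up nor the paper's establishes \eqref{conrate2} as an exact non-asymptotic inequality; your explicit flagging of the asymptotic/Lipschitz caveat is the more honest treatment.
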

\begin{proof}
The proof will be divided into two parts. Prove that the algorithm \eqref{forwite} is convergent and then show that it converges superlinearly.


First, define Lyapunov function candidate $V_k(x_k)=f(x_k)-f(x_*)$, where $x_*$ is the minimum point of $f(x)$. Then $V_k(x_k)\ge 0$ and $V_k(x_*)=0$. Along \eqref{forwite}, we can derive 
\begin{align}
&V_{k+1}(x_{k+1})-V_k(x_k)\no\\
=& f(x_{k+1})-f(x_k)\no\\
=&[\nabla f(x_k)]^T(x_{k+1}-x_{k})+o(|x_{k+1}-x_{k}|^2)\no\\
=&-[\nabla f(x_k)]^T[I-((R+H(x_k))^{-1}R)^{k+1}]H(x_k)^{-1}\nabla f(x_k)+o(|x_{k+1}-x_{k}|^2), \label{deltav}
\end{align}
Recall $\rho(((R+H(x_k))^{-1}R)^{k+1})<1$ and $H(x_k)^{-1}>0$. According to \cite[Thm.H.1.a]{marshall1979inequalities}, each eigenvalue of $(R+H(x_k))^{-1}R)^{k+1}H(x_k)^{-1}$ is positive and $0<\rho((R+H(x_k))^{-1}R)^{k+1}H(x_k)^{-1})<\rho(H(x_k)^{-1})$. Moreover, it can be derived from 
\begin{align}
&(R+H(x_k))^{-1}R)^{k+1}H(x_k)^{-1}\no\\
=&(I+R^{-1}H(x_k))^{-1})^{k+1}H(x_k)^{-1}\no\\
=&(I+R^{-1}H(x_k))^{-1}(I+R^{-1}H(x_k))^{-1}\cdots(I+R^{-1}H(x_k))^{-1}H(x_k)^{-1}\no\\
=&[H(x_k)^{-1}H(x_k)(I+R^{-1}H(x_k))^{-1}][H(x_k)^{-1}H(x_k)(I+R^{-1}H(x_k))^{-1}]\no\\
&\cdots [H(x_k)^{-1}H(x_k)(I+R^{-1}H(x_k))^{-1}]H(x_k)^{-1}\no\\
=&[H(x_k)^{-1}(I+H(x_k)R^{-1})^{-1}H(x_k)][H(x_k)^{-1}(I+H(x_k)R^{-1})^{-1}H(x_k)]\no\\
&\cdots [H(x_k)^{-1}(I+H(x_k)R^{-1})^{-1}H(x_k)]H(x_k)^{-1}\no\\
=&H(x_k)^{-1}(I+H(x_k)R^{-1})^{-1}(I+H(x_k)R^{-1})^{-1}\cdots (I+H(x_k)R^{-1})^{-1}\no\\
=&H(x_k)^{-1}R(R+H(x_k))^{-1}R(R+H(x_k))^{-1}\cdots R(R+H(x_k)^{-1})\no\\
=&H(x_k)^{-1}[R(R+H(x_k))^{-1}]^{k+1}
\end{align}
that
$[(R+H(x_k))^{-1}R)^{k+1}H(x_k)^{-1}]^{T}=(R+H(x_k))^{-1}R)^{k+1}H(x_k)^{-1}$. Consequently, $(R+H(x_k))^{-1}R)^{k+1}H(x_k)^{-1}>0$. 

Until now, it is easy to see from \eqref{deltav} that $V_{k+1}(x_{k+1})-V_k(x_k)<0$. As a consequence, \eqref{forwite} is stable and convergent.

Let $\bar z_k(x)=[I-((R+H(x))^{-1}R)^{k+1}]H(x)^{-1}\nabla f(x)$.
Now direct calculation gives
\begin{align}
\bar z_k(x_*)=&[I-((R+H(x_*))^{-1}R)^{k+1}]H(x_*)^{-1}\nabla f(x_*)=0, \label{barzxstar}\\
\bar z_k'(x_*)=&I-((R+H(x_*))^{-1}R)^{k+1}. \label{debarzxstar}
\end{align}
From \eqref{forwite}, 
\begin{align}A
&x_{k+1}-x_*\no\\
=&x_k-x_*-\bar z_k(x_k)\no\\
\approx&x_k-x_*-[\bar z_k(x_*)+\bar z_k'(x_*)(x_k-x_*)]\no\\
=&x_k-x_*-\bar z_k'(x_*)(x_k-x_*)\no\\
=&(I-\bar z_k'(x_*))(x_k-x_*)\no\\
=&((R+H(x_*))^{-1}R)^{k+1}(x_k-x_*),
\end{align}
where the third line has used the Taylor expansion of $\bar z_k(x)$ at $x_*$. 
Due to that $\rho((R+H(x_*))^{-1}R)<1$ when $H(x_*)>0$, it is immediate to conclude that \eqref{conrate2} holds. The proof is completed.
\end{proof}

\subsection{Superlinear algorithm without involving the inverse of Hessian matrix}

%

It cannot be denied that in \eqref{forwite},  the calculation cost focuses on the inverse of $R+H(x_k)$. The good news is that $R$ in \eqref{forwite} is an adjustable matrix, so is matrix $R+H(x_k)$. To save calculation, 
we replace $(R+H(x_k))^{-1}$ with an adjustable matrix $M$. Note that $(R+H(x_k))^{-1}R=I-(R+H(x_k))^{-1}H(x_k)$. $(R+H(x_k))^{-1}R$ can thus be substituted by $I-MH(x_k)$. 
Then algorithm \eqref{forwite} can be modified as \\
\indent{\rm{\bf Algorithm II:}}
\begin{align}
x_{k+1}=x_k-[I-(I-MH(x_k))^{k+1}]H(x_k)^{-1}\nabla f(x_k). \label{forwitem}
\end{align}

Algorithm \eqref{forwitem} inherits the superlinear convergence of algorithm \eqref{forwite}, which will be shown in the sequel.

\begin{theorem}\label{thmmain}
Assume $\nabla f(x_*)=0$ and $H(x_*)> 0$. If $M>0$ and $\rho(I-MH(x_*))<1$, then $\{x_k\}$ generated by \eqref{forwitem} admits
 \begin{align}
 |x_{k+1}-x_*|\le \rho(I-MH(x_*))^{k+1}|x_k-x_*|. \label{conrate3}
 \end{align}
 i.e.,  Algorithm \eqref{forwitem} is superlinearly convergent.
\end{theorem}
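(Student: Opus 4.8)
The plan is to mirror the structure of the proof of Theorem~\ref{thmmain0} almost verbatim, since algorithm \eqref{forwitem} is obtained from \eqref{forwite} by the substitutions $(R+H(x_k))^{-1}\mapsto M$ and $(R+H(x_k))^{-1}R\mapsto I-MH(x_k)$. First I would define the update map $\bar z_k(x)=[I-(I-MH(x))^{k+1}]H(x)^{-1}\nabla f(x)$ and evaluate it at the critical point: since $\nabla f(x_*)=0$ we get $\bar z_k(x_*)=0$, so $x_*$ is a fixed point of the iteration. Then I would differentiate $\bar z_k$ at $x_*$; every term in the product rule that survives must hit the factor $\nabla f(x)$ (all other factors multiply something that vanishes at $x_*$), which leaves $\bar z_k'(x_*)=[I-(I-MH(x_*))^{k+1}]H(x_*)^{-1}H(x_*)=I-(I-MH(x_*))^{k+1}$, exactly the analogue of \eqref{debarzxstar}.

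Next I would carry out the linearization step: writing $x_{k+1}-x_*=x_k-x_*-\bar z_k(x_k)$ and Taylor-expanding $\bar z_k$ about $x_*$ gives $x_{k+1}-x_*\approx (I-\bar z_k'(x_*))(x_k-x_*)=(I-MH(x_*))^{k+1}(x_k-x_*)$. Taking norms and invoking the hypothesis $\rho(I-MH(x_*))<1$ yields \eqref{conrate3}, and since $\rho(I-MH(x_*))^{k+1}\to 0$ the rate is superlinear. This part is essentially a transcription of the corresponding block in Theorem~\ref{thmmain0}, so I would keep it terse.

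The one place where the argument genuinely differs, and hence the main obstacle, is the global convergence / descent part. In Theorem~\ref{thmmain0} one showed $V_{k+1}-V_k<0$ using that $((R+H(x_k))^{-1}R)^{k+1}H(x_k)^{-1}$ is symmetric positive definite, which relied on the explicit commuting-factor identity $((R+H)^{-1}R)^{k+1}H^{-1}=H^{-1}[R(R+H)^{-1}]^{k+1}$. With a generic $M$ in place of $(R+H(x_k))^{-1}$ this symmetrization breaks down: $(I-MH(x_k))^{k+1}H(x_k)^{-1}$ need not be symmetric, so the clean sign argument for $[\nabla f(x_k)]^T[I-(I-MH(x_k))^{k+1}]H(x_k)^{-1}\nabla f(x_k)>0$ is no longer automatic. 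I see two ways to handle this. The cleaner option, consistent with the hypotheses actually stated ($M>0$, $\rho(I-MH(x_*))<1$, $H(x_*)>0$, $\nabla f(x_*)=0$ — note strict convexity is \emph{not} assumed), is to present the theorem as a \emph{local} convergence result: by continuity $\rho(I-MH(x))<1$ and $H(x)\succ0$ on a neighbourhood of $x_*$, and the contraction estimate above then shows any $x_k$ in that neighbourhood stays in it and converges, so the Lyapunov descent step can be dropped entirely. The alternative, if one wants a semi-global statement, is to impose the extra structural assumption that $M$ and $H(x_k)$ commute (e.g. $M=\mu I$, which also covers the diagonal-scaling variants the paper is building toward), under which $(I-MH(x_k))^{k+1}H(x_k)^{-1}$ is again symmetric and the eigenvalues of $I-(I-MH(x_k))^{k+1}$ lie in $(0,1)$ when $0\prec MH(x_k)\prec 2I$, recovering the descent inequality exactly as before. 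I would write the proof around the first (local) reading, since that is what the stated hypotheses support, and remark that the Lyapunov argument of Theorem~\ref{thmmain0} goes through unchanged whenever $M$ commutes with the Hessian.
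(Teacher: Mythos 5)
Your proposal matches the paper's own proof of Theorem~\ref{thmmain}, which consists solely of the fixed-point evaluation $\hat g_k(x_*)=0$, the derivative $\hat g_k'(x_*)=I-(I-MH(x_*))^{k+1}$, and the Taylor linearization $x_{k+1}-x_*\approx(I-MH(x_*))^{k+1}(x_k-x_*)$ --- exactly the ``local reading'' you settled on, with no Lyapunov descent step. Your observation that the symmetrization argument from Theorem~\ref{thmmain0} breaks for generic $M$ is a correct and worthwhile diagnosis, but it does not change the route: the paper simply drops that part, as you do.
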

\begin{proof}
Denote 
\begin{align}
\hat g_k(x)=[I-(I-MH(x))^{k+1}]H(x)^{-1}\nabla f(x).
\end{align}
Direct derivation shows
\begin{align}
\hat g_k(x_*)=&[I-(I-MH(x_*))^{k+1}]H(x_*)^{-1}\nabla f(x_*)=0, \label{hatgkxstar}\\
\hat g'_k(x_*)=&I-(I-MH(x_*))^{k+1}. \label{dehatgkxstar}
\end{align}
From \eqref{forwitem}, 
\begin{align}
&x_{k+1}-x_*\no\\
=&x_k-x_*-\hat g_k(x_k)\no\\
\approx&x_k-x_*-[\hat g_k(x_*)+g'_k(x_*)(x_k-x_*)]\no\\
=&x_k-x_*-\hat g'_k(x_*)(x_k-x_*)\no\\
=&(I_n-\hat g'_k(x_*))(x_k-x_*)\no\\
=&(I-MH(x_*))^{k+1}(x_k-x_*).
\end{align}
In view of $ \rho(I-MH(x_*))<1$ when $H(x_*)>0$, the proof is completed.
\end{proof}

\begin{remark}
It should be pointed out that, unlike Newton's method,  algorithm \eqref{forwitem} still works even if there occasionally holds $H(x)=0$ because algorithm \eqref{forwitem} has an equivalent realization
 \begin{align}
x_{k+1}=&x_{k}-\hat g_k(x_k), \label{itewithhatgkm}\\
\hat g_l(x_k)=&M\nabla f(x_k)+(I-MH(x_k)) \hat g_{l-1}(x_k), \hat g_0(x_k)=M\nabla f(x_k),l=1,\dots,k. \label{hatgkm}
\end{align}
\end{remark}

\begin{remark} When $H(x)>0$ and $M>0$, only if the spectral radius of $M$ is small enough, $\rho(I-MH(x))<1$ always holds by appropriately selecting $M$. 
\end{remark}

This remark makes us realize that it is possible to select a diagonal matrix $M$ in algorithm \eqref{itewithhatgkm}-\eqref{hatgkm} to ensure that the algorithm is superlinearly convergent. 

\subsection{Superlinear algorithms without involving Hessian matrix}

In addition, to alleviate the requirement for the second-order information,  the Hessian matrix $H(x_k)$ in algorithm \eqref{itewithhatgkm}-\eqref{hatgkm} can be approximated by the first-order difference. As a result, algorithm \eqref{itewithhatgkm}-\eqref{hatgkm} is replaced by \\
\indent{\rm{\bf Algorithm III:}}
 \begin{align}
x_{k+1}=&x_{k}-\bar g_k(x_k), \label{diaite}\\
\bar g_l(x_k)=&D \nabla f(x_k)+(I-D D_1(x_k)) \bar g_{l-1}(x_k), \bar g_0(x_k)=D \nabla f(x_k),l=1,\dots,k, \label{bargk}
\end{align}
where $D>0$ is an appropriate diagonal matrix and $D_1(x_k)$ is the first-order backward difference of $\nabla f(x)$ at point $x_k$. Denote $x_{k,i}$ as the $i-th$ element of $x_k$ and $\nabla f_i(x)$ as the $i-th$ element of $\nabla$. Then the $i,j-th$ element of $D_1(x_k)$ is given by $[D_1(x_k)]_{i,j}=[(\nabla f_i(x_k)-\nabla f_i(x_{k-1}))/(x_{j,k}-x_{j,k-1})]$.

To reduce calculation cost further, we replace the Hessian matrix $H(x_k)$ with a diagonal matrix $\Lambda(x_k)$. It has the identical diagonal elements with the Hessian matrix $H(x_k)$. That is to say, $\Lambda(x_k)={\rm diag}\{\frac{\partial f^2(x)}{\partial x_1^2(x)},\cdots, \frac{\partial f^2(x)}{\partial x_n^2}\}|_{x=x_k}$. As a result, algorithm \eqref{itewithhatgkm}-\eqref{hatgkm} is modified as \\
\indent{\rm{\bf Algorithm IV:}}
 \begin{align}
x_{k+1}=&x_{k}-\tilde g_k(x_k), \label{dualdiaite}\\
\tilde g_l(x_k)=&D \nabla f(x_k)+(I-D \Lambda(x_k)) \tilde g_{l-1}(x_k), \tilde g_0(x_k)=D \nabla f(x_k),l=1,\dots,k, \label{tildegk}
\end{align}
where $D>0$ is an appropriate diagonal matrix.

\begin{remark}
Only if we select $D$ such that $I-D \Lambda(x_k)>0$ and $\rho(I-D \Lambda(x_k))<1$, then Algorithm IV is superlinearly convergent.
\end{remark}

From the above discussion, the computational complexity of Algorithm I, Algorithm II, Algorithm III, and Algorithm IV has been reduced sequentially.


%

\section{Conclusion}
In the paper, several optimization algorithms have been developed. They are enlightened by the optimal trajectory of the related OCP. The algorithms converge more rapidly than the gradient descent. Meanwhile, unlike Newton's method, they still work even if the Hessian matrix in iterate is singular or ill-conditioned. The convergence rate of the algorithms will be superlinear if the adjustable matrices are selected as required.

 \bibliographystyle{ieeetr}
\bibliography{reference}

\end{document}